\newtheorem{theorem}{Theorem}
\newtheorem{proof}{Proof}
\title{Iteratively reweighted penalty alternating minimization methods  with continuation for  image deblurring}
\name{Tao Sun$^{\dagger}$ \qquad Dongsheng Li$^{\dagger}$ \qquad Hao Jiang$^{\dagger}$ \qquad Zhe Quan$^{\sharp}$\thanks{This work is supported in part by  National Basic Research Program of China (2014CB340303), National Science Foundation of China (61402495 and 61571008), and National Natural Science Foundation of Hunan Province in China (2018JJ3616), and National Natural Science Foundation for the Youth of China (61602166). Zhe Quan is the corresponding author. }}
\address{$^{\dagger}$ College of Computer, National University of Defense Technology,\\ \textit{nudtsuntao@163.com},\{dsli,haojiang\}@nudt.edu.cn\\
$^{\sharp}$ College of Computer Science and Electronic Engineering, Hunan University,\textit{ quanzhe@hnu.edu.cn}}
\begin{document}

\maketitle
\begin{abstract}
In this paper, we consider a class of nonconvex problems with linear constraints appearing frequently in the area of image processing. We solve this problem by the penalty  method and propose the iteratively reweighted alternating minimization algorithm. To speed up the algorithm, we also apply  the continuation strategy to the penalty parameter. A convergence result is proved for the algorithm. Compared with the nonconvex ADMM, the proposed algorithm enjoys both theoretical and computational advantages like weaker convergence requirements and faster speed.    Numerical results demonstrate the efficiency  of the proposed algorithm.
\end{abstract}
\begin{keywords}
Nonconvex alternating minimization, Penalty method, Continuation, Total Variation, Image deblurring
\end{keywords}
\section{Introduction}
Linearly constrained problems are widely  discussed through various disciplines  such as image sciences, signal processing and machines learning, to name a few. The classical algorithm for the linearly constrained problems is the Alternating Direction Method of Multiplier (ADMM); and the previous literature has paid their attention to the convex case \cite{eckstein2015understanding,fortin2000augmented,gabay1976dual,glowinski1985numerical}. In recent years, nonconvex ADMM has been developed for the nonconvex problems \cite{xu2013block,wang2015global,hong2016convergence,wang2015convergence}.
 \subsection{Motivations}
 Although ADMM can be applied to the nonconvex Total Variation (TV) deblurring problem, several drawbacks still exist.  We point out three   of them as follows.
\begin{enumerate}
\item   The convergence guarantees of nonconvex ADMMs require  a very large Lagrange dual multiplier.  Worse still, the  large multiplier   makes the nonconvex ADMM  run slowly.
\item    When applying nonconvex ADMMs to the nonconvex TV deblurring model, by direct checks, the convergence requires TV operator to be full row-rank; however, the TV operator cannot promise such an assumption. This point has been proposed in \cite{sun2017iteratively}.
\item   The previous analyses show that the sequence converges to a critical point of an auxiliary function under several assumptions. But  the relationship between the  auxiliary function and the original one is unclear in the nonconvex settings.
\end{enumerate}
Considering these drawbacks, from both computational and theoretical perspectives, it is necessary to consider novel and efficient solvers.   The main reason why nonconvex ADMMs have these drawbacks is due to the dual variable; in the convergence proof of the nonconvex case, the  dual variables are just simply processed by   Cauchy inequalities; the deductions in the proofs are then somehow loose. Therefore,   we consider employing the penalty method to avoid using the dual information.

\subsection{Contributions and organization}
In this paper, we consider using the penalty method for a class of nonconvex linearized constrained minimizations. Different from the nonconvex ADMM, determining the penalty  multiplier in the proposed algorithm is very lowly-costly. Although  the penalty  multiplier is also large, we can use a continuation method, i.e., increasing the penalty  multiplier in the iteration. The alternating minimization methods \cite{beck2015convergence,sun2017little} are fit for solving this penalty problem. Directly applying the alternating minimization for the penalty problem encounters an  issue: the subproblem may have no closed form.  To overcome this problem, combining the structure of the problem, we use the linearized techniques for the regularized part in the algorithm. In this way, all the subproblems are convex and can be minimized numerically globally even without enjoying a closed form solution. We proved the square-summability    of the successive differences of the generated points. We apply our algorithm to the nonconvex image delurring problem and compare it with the nonconvex ADMM. The numerics show the efficiency and speed  of the proposed algorithm.

In Section 2, we present our problem and algorithm, and the convergence results of the algorithm. Section 3 contains the applications and numerics. And then Section 4 concludes the paper.
\section{Problem formulation and algorithm}
In this paper, we consider a broad class of nonconvex and nonsmooth problems with the following form:
\begin{equation}\label{model}
    \min_{x,y}\{\Psi(x,y):=f(x)+\sum_{i=1}^{N}h(g(y_i)),~\textrm{s.t.}~Ax+By=c\}.
\end{equation}
where $x\in \mathbb{R}^M,y\in \mathbb{R}^N$, and functions $f$, $g$ and $h$ satisfy the following assumptions:
\begin{itemize}
\item \textbf{A.1} $f:\mathbb{R}^M\rightarrow \mathbb{R}$ is a closed proper convex function and $\inf_{x\in\mathbb{R}^M}f(x)>-\infty$.
\item \textbf{A.2} $g:\mathbb{R}\rightarrow \mathbb{R}$ is a convex function, and the proximal map of $g$ is easy to calculated.\footnote{We say the proximal map of $g$ is easy to calculate  if the minimization problem $\textbf{Prox}_{g}(d):=\textrm{arg}\min_{t\in\mathbb{R}}\{g(t)+\frac{1}{2}|t-d|^2\}$ can be solved very easily for any $d\in\mathbb{R}$.}
\item \textbf{A.3} $h:\textrm{Im}(g)\rightarrow \mathbb{R}$ is a concave function and $\inf_{t\in\textrm{Im}(g)}h(t)>-\infty$.
\end{itemize}

A very classical problem which can be formulated as \eqref{model} is Total Variation $q$ (TV-$q$) deblurring \cite{hintermuler2013nonconvex}
\begin{equation}\label{tvq}
    \min_{u}\{\frac{1}{2}\|H(u)-B\|_F^2+\lambda\|T(u)\|_q^q\},
\end{equation}
where $H$ is the blurring operator, $T$ is the well-known total variation  operator and $q\in(0,1)$. By defining $v:=T(u)$, the problem then turns to
\begin{equation}\label{ctvq}
    \min_{u,v}\{\frac{1}{2}\|H(u)-B\|_F^2+\lambda\|v\|_q^q~~\textrm{s.t.}~~v=T(u)\}.
\end{equation}

\subsection{Algorithm}
We consider the penalty function as
\begin{equation}\label{almodel}
   \min_{x,y}\{\Phi_{\gamma}(x,y):=f(x)+\sum_{i=1}^{N}h(g(y_i))+\frac{\gamma}{2}\|Ax+By-c\|_2^2\}.
\end{equation}
The  difference between problem (\ref{model}) and (\ref{almodel})  is determined by the parameter $\gamma$. They are identical if $\gamma=+\infty$.
Assume that $(x^*,y^*)$ is the solution to problem (\ref{almodel}), and $(x^{\dag},y^{\dag})$ is the solution to problem (\ref{model}),  and $(\hat{x},\hat{y})$ is any one satisfying $A\hat{x}+B\hat{y}=c$, we then have the following claims.
\begin{equation}\label{claim1}
    \Psi(x^{*},y^{*})\leq \Psi(x^{\dag},y^{\dag})
\end{equation}
and
\begin{equation}\label{claim2}
  \|Ax^{*}+By^{*}-c\|_2^2\leq\frac{2}{\gamma}[ \Psi(\hat{x},\hat{y})-\underline{f}-\underline{h_{g}}],
\end{equation}
where $\inf_{x\in\mathbb{R}^M}f(x)\geq \underline{f}$ and $\inf_{t\in\textrm{Im}(g)}h(t)\geq \underline{h_{g}}$. These two claims can provide the errors between (\ref{model}) and (\ref{almodel}).
We present brief proofs for claims \eqref{claim1} and \eqref{claim2}.
First,  with the definition of $(x^{*},y^{*})$, we have
\begin{equation}\label{diff-t1}
    \Phi_{\gamma}(x^{*},y^{*})\leq\Phi_{\gamma}(x^{\dag},y^{\dag}).
\end{equation}
Noting $Ax^{\dag}+By^{\dag}=c$,
 we then have
 \begin{equation}
    \Psi(x^{*},y^{*})+\frac{\gamma}{2}\|Ax^{*}+By^{*}-c\|_2^2\leq \Psi(x^{\dag},y^{\dag}).
 \end{equation}
 Thus, we are led to
 \begin{equation}
   \Psi(x^{*},y^{*})\leq \Psi(x^{\dag},y^{\dag}).
 \end{equation}
 Similarly, we derive
  \begin{equation}
    \Psi(x^{*},y^{*})+\frac{\gamma}{2}\|Ax^{*}+By^{*}-c\|_2^2\leq \Psi(\hat{x},\hat{y}).
 \end{equation}
  With the fact $\Psi(x^{*},y^{*})\geq \inf_{x\in\mathbb{R}^M}f(x)+\inf_{t\in\textrm{Im}(g)}h(t)$,
we then get \eqref{claim2}.

We apply the claims to the    TV deblurring problem \eqref{ctvq}, we can see $\inf_{u}\|H(u)-B\|_2^2\geq 0, \inf_{v}\|v\|_q^q\geq 0$; and we can choose $\hat{u}=\textbf{0}$ and $\hat{v}=\textbf{0}$. Then, it holds
$$\|Tu^{*}-v^{*}\|_2^2\leq\frac{2\|B\|^2_2}{\gamma},$$
where $(u^{*},v^{*})$ is the minimizer of the penalty problem.
Thus, to achieve $\varepsilon$ error approximation, we just need to set $\gamma=\frac{2\|B\|^2_2}{\varepsilon}$.

The classical algorithm solving this problem is the Alternating Minimization (AM) method, i.e., minimizing one variable while fixing the other one. However, if directly applying AM to model (\ref{almodel}), the subproblem may still be nonconvex; the minimizer is hard to obtain in most cases. Considering the structure of the problem, we use a linearized technique for the  nonsmooth part $\sum_{i=1}^{N}h(g(y_i))$. This method was inspired by the reweighted algorithms \cite{chartrand2008iteratively,candes2008enhancing,chen2014convergence,sun2017global}. To derive the sufficient descent, we also add a proximal term. We call it as  iteratively reweighted penalty alternating minimization (IRPAM) method  which can be described as
\begin{equation}\label{scheme}
    \left\{\begin{array}{c}
             x^{k+1}\in \textrm{arg}\min_{x} \{f(x)+\frac{\gamma}{2}\|Ax+By^k-c\|_2^2\},\\
             y^{k+1}\in \textrm{arg}\min_{y} \{\sum_{i}^N w^{k}_i g(y_i)\\
             +\frac{\gamma}{2}\|Ax^{k+1}+By-c\|_2^2+\frac{\delta\gamma\|y-y^k\|_2^2}{2}\},
           \end{array}
    \right.
\end{equation}
where $w^{k}_i\in -\partial(-h(g(y_i^k)))$, $i=1,2,\ldots,N$. If $h(t)=t$, $-\partial(-h(g(\cdot)))=1$; the algorithm is actually the AM. In the algorithm, all subproblems are convex. If the proximal maps of $g$ and $f$ are easy to calculate, both  subproblems are easy to solve. If $B=I$, the minimizer of the second problem reduces to the following form
\begin{equation}
    y^{k+1}_i=\textbf{Prox}_{w_i^{k+1}g/(1+\delta)\gamma}(\frac{\delta y^k_i}{1+\delta}+\frac{c_i}{1+\delta}-\frac{A_i x^{k+1}}{1+\delta}),
\end{equation}
where $A_i$ is the $i$-th row of $A$, and  $i=1,2,\ldots,N$. Actually, in the TV deblurring model, $B$ is identical map.
When implementing the algorithms, we increase $\gamma$ in each iteration and set an upper bound $\bar{\gamma}$. This  continuation technique was used in   \cite{hale2008fixed,sun2016convergence,sunhard}. In the continuation version, we use  $\gamma_k$ rather than   constant $\gamma$ in the $k$-th iteration.  And the scheme of IRPAM(C) can be presented as follows.
\begin{algorithm}
\caption{Iteratively reweighted penalty alternating minimization  (with continuation)}
\begin{algorithmic}
\REQUIRE   parameters $\bar{\gamma}>0,a>1,\delta>0$ \\
\textbf{Initialization}: $z^{0}=(x^0,y^0)$, $\gamma_0>0$\\
\textbf{for} $k=0,1,2,\ldots$ \\
~~~ $x^{k+1}\in \textrm{arg}\min_{x} \{f(x)+\frac{\gamma_k}{2}\|Ax+By^k-c\|_2^2\}$ \\
~~~ $w^{k}_i\in -\partial(-h(g(y_i^k)))$, $i\in[1,2,\ldots,N]$ \\
~~~ $y^{k+1}\in \textrm{arg}\min_{y} \{\sum_{i}^N w^{k}_i g(y_i)+\frac{\gamma_k}{2}\|Ax^{k+1}+By-c\|_2^2+\frac{\delta\gamma_k\|y-y^k\|_2^2}{2}\}$ \\
~~~ $\gamma_{k+1}= \min\{\bar{\gamma},(a\gamma_k)\}$\\
\textbf{end for}\\
\textbf{Output} $x^k$\\
\end{algorithmic}
\end{algorithm}
In Algorithm 1, if we set $a=1$, the algorithm is indeed the IRPAM; and if  $a>1$, the algorithm is then the IRPAMC.  When   IRPAMC being applied to the TV-q deblurring problem, the subproblems just involve  with FFT and soft-shrinkages which can be solved fast. More details can be founded in \cite{wang2008new}.

Variants of IRPAMC can be developed by using linearization for the quadratic term, or adding the term $\frac{1}{2}\|x-x^k\|^2_2$  in the minimization in the $k$-th iteration, or even  by hybrid way. In  \cite{sun2017little}, the authors introduced various AM schemes which  can be modified for IRPAMC to propose variants.

We have shown that for any given $\varepsilon$, $\gamma$ can be set explicitly. And the convergence of IRPAMC is free of the requirement for the full-rank of $T$. Then,  compared with the nonconvex ADMM, IRPAMC can overcome the three drawbacks pointed out in previous section.
\subsection{Convergence}
In this part, we present the convergence of IRPAMC.  Specifically, we prove that the square of the difference of the generated point is summable. For technical reasons, we need an extra assumption.
\begin{itemize}
\item \textbf{A.4} $f(x)+\frac{1}{2}\|Ax\|^2_2$ is strongly convex with $\nu$.
\end{itemize}
Now, we discuss the validity of Assumption \textbf{A.4}. For the deblurring model \eqref{ctvq}, \textbf{A.4}   actually requires $\|H(u)\|^2_F+\|T(u)\|^2_F$  to be strongly convex.  With basic linear algebra, we just need to verify $\textrm{Null}(H)\bigcap \textrm{Null}(T)=\textbf{0}$.  Direct computing gives us $\textrm{Null}(T)=(1)_{M\times N}:=\textbf{1}_{M\times N}$. For the blurring operator $H$, $H(\textbf{1}_{M\times N})\neq\textbf{0}$. That means \textbf{A.4} holds for the  deblurring model.
\begin{theorem}\label{descend}
Assume that $(z^{k})_{k\geq 0}$ is generated by IRPAMC and Assumptions \textbf{A.1}, \textbf{A.2}, \textbf{A.3} and \textbf{A.4}  hold, and $\delta>0$. Then we have the following results.\\

(1)  It holds that
\small
\begin{align}\label{descend1}
    &\Phi_{\bar{\gamma}}(x^k,y^k)-\Phi_{\bar{\gamma}}(x^{k+1},y^{k+1})\nonumber\\
    &\quad\quad\geq \min\{\bar{\gamma},\nu\bar{\gamma}\}\cdot\|x^{k+1}-x^{k}\|_2^2+\frac{\delta\bar{\gamma}\|y^{k+1}-y^k\|_2^2}{2}.
\end{align}
\normalsize
for $k>K$ with   $K=\ulcorner\log_{a}(\frac{\bar{\gamma}}{\gamma_0})\urcorner$.

(2) $\sum_{k}(\|x^{k+1}-x^{k}\|_2^2+\|y^{k+1}-y^{k}\|_2^2)<+\infty$, which implies  that
\begin{equation}
    \lim_{k}\|x^{k+1}-x^k\|_2=0,\,~\lim_{k}\|y^{k+1}-y^k\|_2=0.
\end{equation}
\end{theorem}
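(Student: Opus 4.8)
The plan is to establish the one-step decrease~(1) first, and then deduce~(2) by summing~(1) and invoking boundedness below of the potential. I work throughout with indices $k>K$, where $K=\lceil\log_a(\bar\gamma/\gamma_0)\rceil$; for such $k$ the continuation rule $\gamma_{k+1}=\min\{\bar\gamma,a\gamma_k\}$ has already saturated, so $\gamma_k=\bar\gamma$ and the two updates of Algorithm~1 coincide with the minimizations of the $\bar\gamma$-penalized subproblems. This is exactly why~(1) is phrased for the fixed potential $\Phi_{\bar\gamma}$ and only for $k>K$. Writing $\Phi:=\Phi_{\bar\gamma}$ for brevity, I would split the decrease through the intermediate point $(x^{k+1},y^k)$,
\[
\Phi(x^k,y^k)-\Phi(x^{k+1},y^{k+1})=\bigl[\Phi(x^k,y^k)-\Phi(x^{k+1},y^k)\bigr]+\bigl[\Phi(x^{k+1},y^k)-\Phi(x^{k+1},y^{k+1})\bigr],
\]
and bound the $x$-bracket and the $y$-bracket separately.

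For the $x$-bracket, the $h$-terms agree at $(x^k,y^k)$ and $(x^{k+1},y^k)$, so it equals $F(x^k)-F(x^{k+1})$ with $F(x):=f(x)+\frac{\bar\gamma}{2}\|Ax+By^k-c\|_2^2$. By Assumption~\textbf{A.4} the map $f(\cdot)+\tfrac12\|A\cdot\|_2^2$ is $\nu$-strongly convex, and adding the remaining convex part of the quadratic penalty keeps $F$ strongly convex; since $x^{k+1}$ minimizes $F$, the minimizer inequality $F(x^k)-F(x^{k+1})\ge\tfrac{m}{2}\|x^{k+1}-x^k\|_2^2$ supplies the bound. The one computation that needs care is tracking how the modulus $m$ depends jointly on $\nu$ and the penalty weight $\bar\gamma$, which is what yields the coefficient $\min\{\bar\gamma,\nu\bar\gamma\}$.

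The $y$-bracket is the heart of the argument and the step I expect to be the main obstacle. Here $x$ is frozen at $x^{k+1}$, and I would exploit concavity of $h$ (Assumption~\textbf{A.3}): for any $w_i^k\in-\partial(-h(g(y_i^k)))$ the tangent inequality $h(g(y_i))\le h(g(y_i^k))+w_i^k\bigl(g(y_i)-g(y_i^k)\bigr)$ holds, so up to an additive constant $\sum_i w_i^k g(y_i)$ is a global majorant of $\sum_i h(g(y_i))$ that touches it at $y=y^k$. This majorant is precisely the surrogate minimized in the $y$-update. Since $y^{k+1}$ minimizes this surrogate together with the proximal term $\frac{\delta\bar\gamma}{2}\|y-y^k\|_2^2$, which by itself contributes strong-convexity modulus $\delta\bar\gamma$ (here I use that the weights are nonnegative, so $\sum_i w_i^k g(y_i)$ is convex, and that the quadratic penalty is convex), the minimizer inequality decreases the surrogate by at least $\frac{\delta\bar\gamma}{2}\|y^{k+1}-y^k\|_2^2$. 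Passing back from the surrogate to $\Phi$ through the majorization, the two quadratic penalty terms cancel exactly, leaving the clean bound $\Phi(x^{k+1},y^k)-\Phi(x^{k+1},y^{k+1})\ge\frac{\delta\bar\gamma}{2}\|y^{k+1}-y^k\|_2^2$. The delicate point is to combine the majorization with the minimization in the right order so that the penalty terms cancel instead of accumulating.

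Adding the two bracket bounds gives part~(1). For part~(2) I would sum~(1) over $k=K+1,K+2,\dots$: the left-hand side telescopes, and because $\Phi(x,y)\ge\underline{f}+N\underline{h_g}$ is bounded below by Assumptions~\textbf{A.1} and~\textbf{A.3} while being nonincreasing along the iterates by~(1), the telescoped sum is finite. Consequently $\sum_{k>K}\bigl(\|x^{k+1}-x^k\|_2^2+\|y^{k+1}-y^k\|_2^2\bigr)<+\infty$, and appending the finitely many terms with $k\le K$ preserves finiteness; square-summability then forces the general term to zero, which is the asserted limit.
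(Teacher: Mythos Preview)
Your proposal is correct and follows essentially the same route as the paper: split the decrease through $(x^{k+1},y^k)$, use Assumption~\textbf{A.4} and the minimizer inequality for the $x$-bracket, combine the concavity-based majorization of $h$ with the optimality of $y^{k+1}$ (the proximal term supplying the $\frac{\delta\bar\gamma}{2}\|y^{k+1}-y^k\|_2^2$) for the $y$-bracket, then telescope and use boundedness below for part~(2). The only minor difference is that the paper obtains the $y$-decrease directly from $G(y^k)\ge G(y^{k+1})$ (with the proximal term vanishing at $y^k$) rather than invoking strong convexity of the surrogate, so nonnegativity of the weights is not actually needed for this step.
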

\begin{proof}
(1) The convexity of $-h$ and the fact $-w^{k}_i\in \partial(-h(g(y_i^k)))$ yield
\small
\begin{align}
    &[-h(g(y_i^{k+1}))]-[-h(g(y_i^{k}))]\geq \langle-w^{k}_i,g(y_i^{k+1})-g(y_i^{k})\rangle.
\end{align}
\normalsize
That is also
\begin{equation}\label{descend-t1}
   h(g(y_i^{k}))-h(g(y_i^{k+1}))\geq \langle w^{k}_i,g(y_i^{k})-g(y_i^{k+1})\rangle.
\end{equation}
It is easy to see that $K=\ulcorner\log_{a}(\frac{\bar{\gamma}}{\gamma_0})\urcorner$, $\gamma_k\equiv\bar{\gamma}$ if $k>K$. In the update of $y^{k+1}$, we have
\begin{align}\label{descend-t2}
&\sum_{i}^N w^{k}_i g(y_i^k)+\frac{\bar{\gamma}}{2}\|Ax^{k+1}+By^k-c\|_2^2\nonumber\\
&\quad\geq \sum_{i}^N w^{k}_i g(y_i^{k+1})
             +\frac{\bar{\gamma}}{2}\|Ax^{k+1}+By^{k+1}-c\|_2^2\nonumber\\
&\quad             +\frac{\delta\bar{\gamma}\|y^{k+1}-y^k\|_2^2}{2}.
\end{align}
Combining \eqref{descend-t1} and \eqref{descend-t2}, we then derive
\begin{align}\label{descend-t2+}
&\sum_{i}^N  h(g(y_i^{k}))+\frac{\bar{\gamma}}{2}\|Ax^{k+1}+By^k-c\|_2^2\nonumber\\
&\quad\geq \sum_{i}^N h(g(y_i^{k+1}))
             +\frac{\bar{\gamma}}{2}\|Ax^{k+1}+By^{k+1}-c\|_2^2\nonumber\\
             &\quad             +\frac{\delta\bar{\gamma}\|y^{k+1}-y^k\|_2^2}{2}.
\end{align}
That is also
\begin{eqnarray}\label{descend-t3}
\Phi_{\bar{\gamma}}(x^{k+1},y^k)-\Phi_{\bar{\gamma}}(x^{k+1},y^{k+1})\geq \frac{\delta\bar{\gamma}\|y^{k+1}-y^k\|_2^2}{2}.
\end{eqnarray}
With Assumption \textbf{A.4}, $f(x)+\frac{\bar{\gamma}}{2}\|Ax+By^k-c\|_2^2$ is then strongly convex with $\min\{\bar{\gamma},\nu\bar{\gamma}\}$. While   $x^{k+1}$ is the minimizer, the strong convexity the yields
\begin{align}\label{descend-t2'}
 &\frac{\bar{\gamma}}{2}\|Ax^k+By^k-c\|_2^2+f(x^k)\nonumber\\
 &\qquad-\left(\frac{\bar{\gamma}}{2}\|Ax^{k+1}+By^k-c\|_2^2+f(x^{k+1})\right)\nonumber\\
 &\quad\quad\geq \min\{\bar{\gamma},\nu\bar{\gamma}\}\cdot\|x^{k+1}-x^k\|^2_2.
\end{align}
The relation (\ref{descend-t2'}) also means
\small
\begin{align}\label{descend-t4}
&\Phi_{\bar{\gamma}}(x^{k},y^k)-\Phi_{\bar{\gamma}}(x^{k+1},y^{k})\geq \min\{\bar{\gamma},\nu\bar{\gamma}\}\cdot\|x^{k}-x^{k+1}\|_2^2.
\end{align}
\normalsize
Summing (\ref{descend-t3}) and (\ref{descend-t4}), we then get
\eqref{descend1}.

(2) From (\ref{descend1}), $(\Phi_{\bar{\gamma}}(x^{k},y^{k}))_{k\geq K}$ is non-increasing for large $K$. Noting $\inf_k\{\Phi_{\bar{\gamma}}(x^{k},y^{k})\}>-\infty$, we can see $(\Phi_{\bar{\gamma}}(x^{k},y^{k}))_{k\geq 0}$ is convergent. Hence, we can easily have
\begin{align}
&\sum_{j=K}^{k}(\|x^{j+1}-x^{j}\|_2^2+\|y^{j+1}-y^{j}\|_2^2)\nonumber\\
&\qquad\leq\frac{\Phi_{\bar{\gamma}}(x^{K},y^{K})-\Phi_{\bar{\gamma}}(x^{k+1},y^{k+1})}{\min\{\bar{\gamma},\nu\bar{\gamma},\frac{\delta\bar{\gamma}}{2}\}}<+\infty.
\end{align}
\end{proof}
\section{Application to image deblurring}
In this part, we apply the proposed algorithm  to image deblurring and compare the performance with the nonconvex ADMM. The codes of all algorithms are written entirely in MATLAB, and all the experiments are
implemented under Windows  and MATLAB R2016a running on a laptop with an Intel Core
i5 CPU (2.8 GHz) and 8 GB Memory. The Lena
image  is used in the numerical experiments.

 We solve (\ref{ctvq}) when $q=0.5$, and use the nonconvex ADMM proposed in \cite{sun2017iteratively} for comparison.
The performance of the proposed deblurring algorithms is routinely measured by means of the signal-to-noise ratio (SNR)
\begin{equation}
    \textrm{SNR}(u,u^*):=10\lg\left\{\frac{\|u-\bar{u}\|_2^2}{\|u^*-\bar{u}\|_2^2}\right\},
\end{equation}
where $u$ and $u^*$ denote the original image and the deblurring image, respectively, and $\bar{u}$ stands for
the mean of the original image.
 In the experiments, the blurring operators is generated
by the Matlab command   \verb"fspecial('gaussian',.,.)". The blurred image is generated by
\begin{equation}
    B=H(u)+e,
\end{equation}
where $e$ is the Gaussian noise with power of $\sigma$. In the experiment, we set $\sigma=10^{-8}$ and $\lambda=10^6$, and $\delta=10^{-6}$.
The proposed algorithms  are terminated after 200 iterations.
 The parameters are set as  $\gamma_0=10$, $\bar{\gamma}=1000$ and $a=1.1$. We compare IRPAMC with the nonconvex ADMM, in which the  Lagrange dual multiplier is also set as $1000$.  For both algorithms, the initializations are set as the  blurred image. The numerical results are shown in Fig. 1.
\begin{figure}[!htb]
  \centering
    \subfloat[]{
    \includegraphics[width=1.5in]{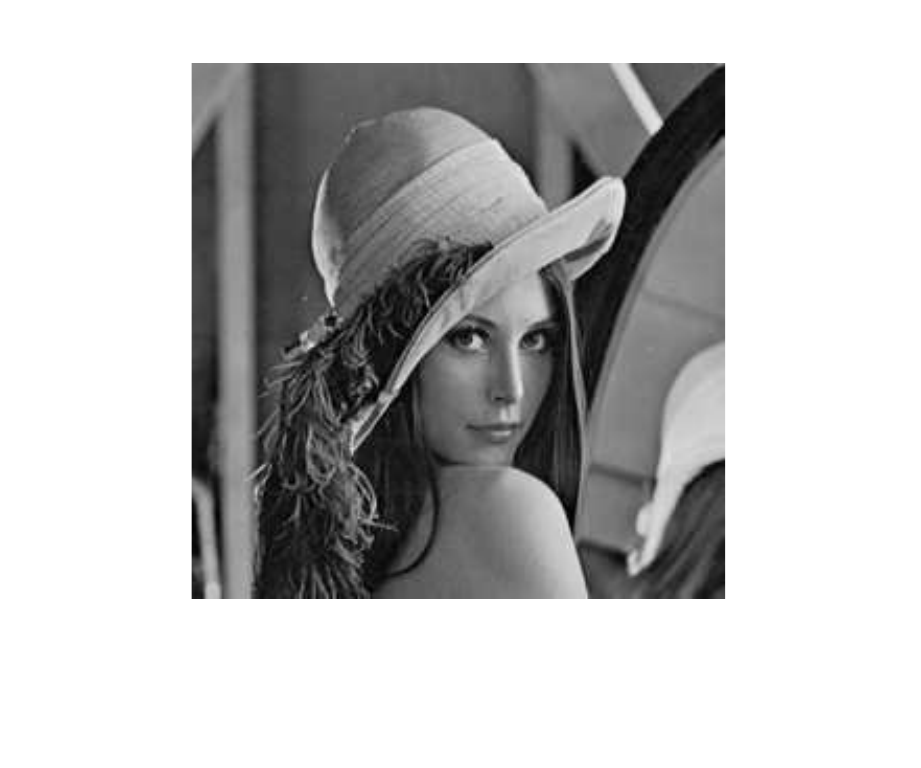}}
  \subfloat[]{
    \includegraphics[width=1.5in]{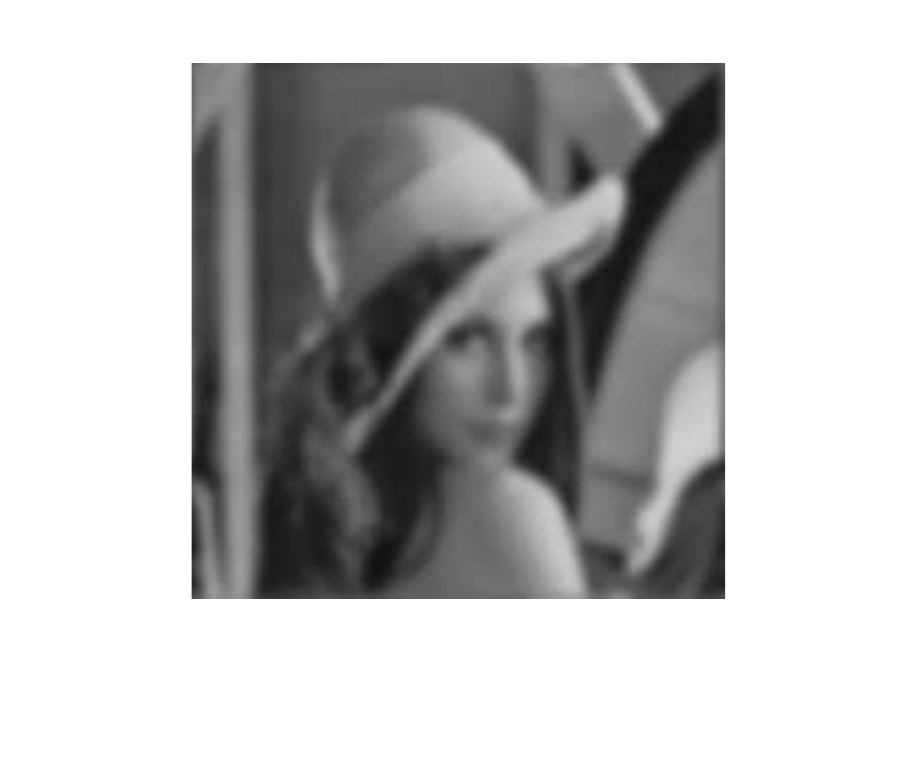}}\\
  \subfloat[]{
    \includegraphics[width=1.5in]{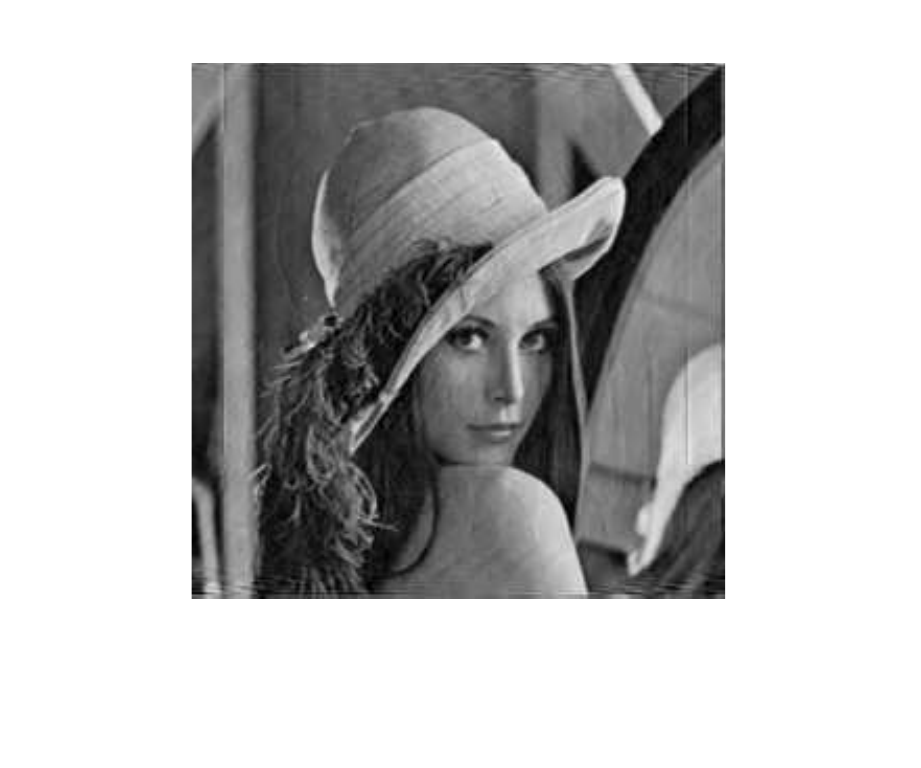}}
      \subfloat[]{
    \includegraphics[width=1.5in]{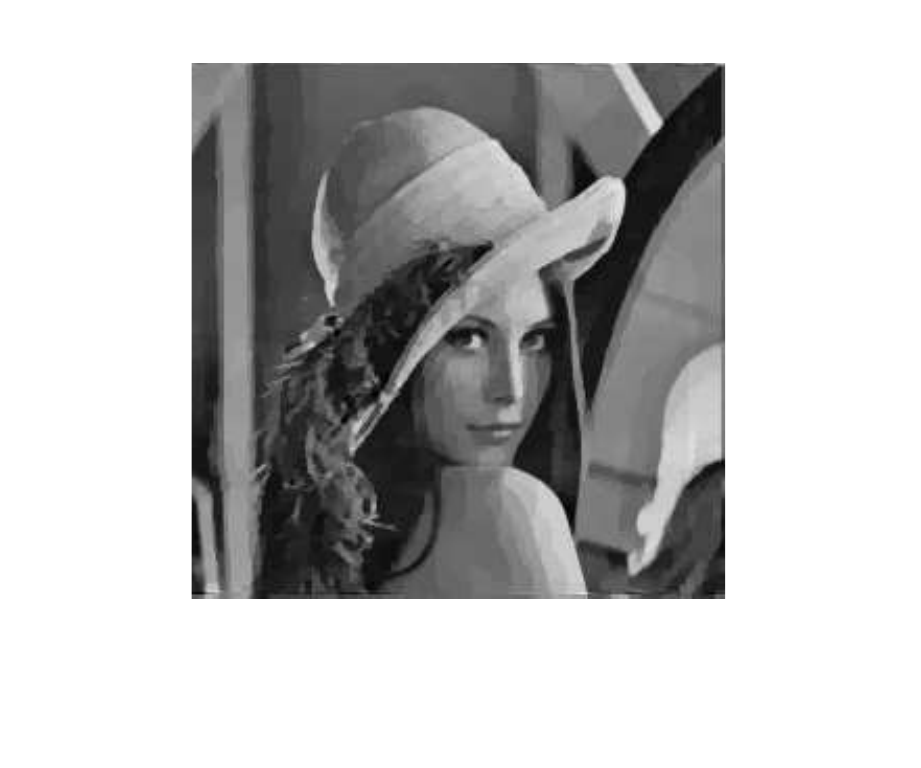}}\\
          \subfloat[]{
    \includegraphics[width=2.4in]{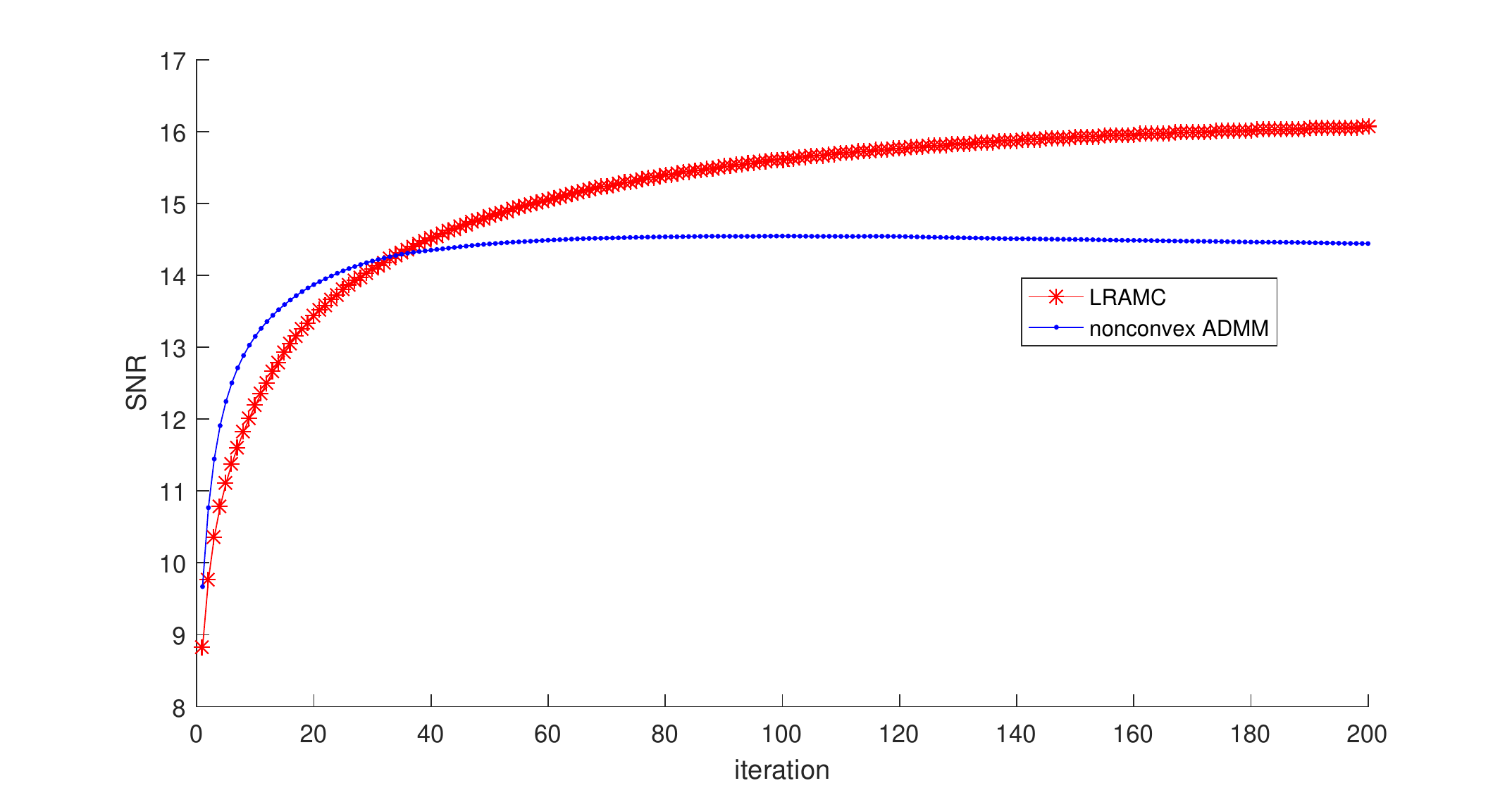}}
\caption{Deblurring results for Lena under Gaussian operator by using the two algorithms. (a) Original image; (b) Blurred image; (c) IRPAMC 16.0dB; (d) nonconvex ADMM 14.4dB; (e) SNR versus the iterations.}
\end{figure}
\section{Conclusion}
In this paper, we propose an iteratively  reweighted alternating minimization algorithm for a class of linearly constrained  problems. The algorithm is developed from the perspective of penalty strategy. To speed up the iteration, we also employ a continuation trick for the penalty parameter. We prove the convergence of the algorithm under weaker assumptions than the nonconvex ADMM. Numerical results on the nonconvex TV deblurring problem are also presented for demonstrating the efficiency of the proposed algorithm.



\end{document}